\newtheorem{theorem}{Theorem}[section]
\newtheorem{lemma}[theorem]{Lemma}
\newtheorem{proposition}[theorem]{Proposition}
\newtheorem{rmk}[theorem]{Remark}
\def\T{\mathbb T}
\def\sign{{\rm sign}}
\def\R{{\mathbb R}}
\def\la{\langle}
\def\ra{\rangle}
\def\mF{{\mathcal F}}
\def\les{\lesssim}
\def\1{{\bf 1}}
\def\eqnn{\begin{eqnarray*}}
\def\eeqnn{\end{eqnarray*}}
\def\eqn{\begin{eqnarray}}
\def\eeqn{\end{eqnarray}}
\newcommand{\nc}{\newcommand}
\nc{\be}{\begin{equation}}
\nc{\ee}{\end{equation}}
\nc{\ba}{\begin{eqnarray}}
\nc{\ea}{\end{eqnarray}}
\nc{\eps}{\epsilon}
\def\prf{\begin{proof}}
\def\endprf{\end{proof}}
\begin{document}

\title[Smoothing of nonlinear waves]{Smoothing estimates for weakly nonlinear internal waves in a rotating ocean}

\author[ Erdo\u{g}an  and Tzirakis]{M. B. Erdo\u{g}an   and N. Tzirakis}
\thanks{The first author is partially supported by NSF grant  DMS--2154031.}
 
\address{Department of Mathematics \\
University of Illinois \\
Urbana, IL 61801, U.S.A.}
\email{berdogan@illinois.edu}

\address{Department of Mathematics \\
University of Illinois \\
Urbana, IL 61801, U.S.A.}
\email{tzirakis@illinois.edu}

\date{}

\begin{abstract}
In this paper we study the effect of rotation on nonlinear wave phenomena in weakly dispersive media modeled by the Korteweg-de Vries equation on the real line. It is well known that smoothing in the case of the KdV equation with periodic boundary conditions is a result of the presence of high frequency waves that weaken the nonlinearity through time averaging, \cite{bit},  \cite{et1}. High frequency interactions through normal form transformations can be thought of as adding a fast rotating term in certain toy PDE models that arrest the development of singularities, \cite{bit}.  It is crucial for this phenomena that  the zero Fourier mode can be removed due to the conservation of the mean. 
On the real line this mechanism breaks down as the resonance sets close to zero frequency are sizable and normal form transformations are not useful, \cite{imt}, and hence smoothing fails. The model we study is a perturbation of the KdV equation on a rotating frame of reference. A combination of a suitable normal form transformation along with the restricted norm method of Bourgain \cite{Bou2} implies smoothing on the real line.
\end{abstract}

\maketitle
\section{Introduction}
On this paper we study a perturbation of the Korteweg-de Vries (KdV) equation that models weakly nonlinear waves in the ocean taking into account the rotation of the earth,  \cite{ostr}, \cite{step}. Unlike KdV, the equation is not known to be completely integrable, see Section 3.2 in \cite{go}. The main purpose of our exposition is to prove that, when the data are in classical Sobolev spaces, the nonlinear part of the equation on the real line is smoother than the initial data. Nonlinear smoothing in dispersive equations has recently played an important role in analyzing the local and global behavior of low regularity solutions \cite{Bbook},  \cite{taobook}, \cite{etbook}.  There exist a variety of applications of nonlinear smoothing in both linear and nonlinear set-ups for dispersive partial differential equations (PDEs) with various boundary conditions. The most common situations are the open boundary case of the real line and  bounded intervals with periodic boundary conditions. Also of interest are  the Euclidean spaces in general, semi-infinite intervals, and bounded intervals where boundary conditions for the values of the data or their derivatives are present. Some 30 years ago through the seminal work of   Bourgain, it was realized that nonlinear smoothing plays an important role in obtaining global-in-time solutions for dispersive equations and systems with infinite energy, \cite{bou2}.  Smoothing was also used to obtain higher order Sobolev norms estimates for globally defined solutions of dispersive equations that do not scatter, \cite{bou1}.

For periodic problems smoothing has been used to study the problem of dispersive quantization (or Talbot effect) in dispersive PDEs, see \cite{et2}, \cite{cet}, \cite{es} and the references therein. Moreover, for both periodic and non-periodic PDEs smoothing leads to the asymptotic compactness of the global evolution, which in turn implies the existence and uniqueness of global attractors for forced and dissipative dispersive PDE, \cite{et4}.  Finally another important application is the almost everywhere convergence of solutions of dispersive equations to the initial data in Sobolev spaces. For similar developments the interested reader can consult the book \cite{etbook}.

In the current manuscript we prove nonlinear smoothing for a variant of the KdV   equation after transforming it using   normal form transformations. More precisely we study the Cauchy problem on $\R$ associated with the equation 
\be \label{Ostr}\left\{\begin{array}{l}
u_t+\beta u_{xxx}- \gamma \partial_x^{-1} u + \partial_x(u^2)  =0,\,\,\,\,x\in\R , t\in \R,\\
u(x,0)=f(x),  
\end{array}\right. 
\ee
 which describes weakly nonlinear wave processes in the ocean taking into account the earth rotation, \cite{ostr}, \cite{step}. In the literature, this equation is often called the Ostrovsky equation. The parameter $\gamma$ measures the effect of rotation and the parameter $\beta$ the strength of the dispersion. Note that the operator $\partial_x^{-1}$ is defined on the Fourier side by
 $$\widehat{\partial_x^{-1}f}(\xi)=\frac{\widehat{f(\xi)}}{i\xi}.$$
 In our paper, we consider the case where $\beta \cdot \gamma>0$. In particular, it is enough to consider  $\beta=  \gamma=1$. Later in an Appendix we show that the nonlinear solution is not smoother than the initial data when $\beta \cdot \gamma<0$. For any $\beta \cdot \gamma \neq 0$ the equation is optimally well--posed at the $H^{-\frac34}$ level, \cite{isa}, \cite{lhy}, \cite{yanyang}. A result of Tsugawa, \cite{tsug} slightly improved the result in \cite{isa} by proving well--posedness in anisotropic Sobolev spaces that contains the $-\frac34$ threshold as a special case. This last result also proves that the solution of the
equation \eqref{Ostr} converges to the solution of the KdV equation
when the rotation parameter $\gamma$ goes to $0$ and the common initial data  is in the $L^2$ space. To prove such a  result the author needed to  obtain a new, uniform in $\gamma$, bilinear
estimate.
 
 Smooth solutions of \eqref{Ostr} satisfy $L^2$ conservation, namely $\|u(t)\|_{L_x^2(\R)}=\|f\|_{L^2(\R)}$. The above results then establish that the solutions are in fact global at least for $L^2$ data. This result was extended in \cite{isa1} to include global-in-time solutions for any $s>-\frac{3}{10}$. 
 
 In \cite{isa}, the authors prove that the initial value problem \eqref{Ostr} is locally well--posed   in $H^{s}$, $s>-\frac34$, by constructing a unique solution $u\in X^{s,b}$ for any $s>-\frac34$ and $b>\frac12$ valid in $[-T,T]$ where $T=T(\|f\|_{H^s})$. In effect, the local solution can be obtained by means of Picard iterations of the Duhamel operator
 $$
u=e^{-tL }f(x)-  e^{-tL} \int_0^t e^{sL}(u u_{x}) ds.
$$ 
Here we prove that the difference of the nonlinear solution with the linear evolution (the integral term above)  is in a smoother Sobolev space than the initial data as long as the solution is locally well--posed by the result in \cite{isa}.  For the original KdV equation smoothing is not possible on the real line, \cite{imt}. To accomplish this we apply the differentiation by parts method of \cite{bit}. It may be  possible to obtain a smoothing estimate by using the sharp result of \cite{isa} within the framework of the $X^{s,b}$ method, but the smoothing gained would be rather marginal, much less than the values we outline here. We now state the main result of this paper: 
\begin{theorem}\label{theo:main1}
Fix $s>-\frac34 $ and $a<\min(s+\frac34,\frac12)$. Consider the real valued solution of the equation \eqref{Ostr} (with $\beta=\gamma=1$)
on $\mathbb{R}\times [0,T_{LWP}]$ with initial data $u(x,0)=f(x) \in H^s(\Bbb R)$. Then $u(t)-e^{tL}f \in C^0_tH^{s+a}_x$ and  
$$\|u(t)- e^{-tL}f\|_{H^{s+a}} \leq  C(s,\|f\|_{H^s}),$$ 
on $[0,T_{LWP}]$ where $L=\partial_{xxx}-\partial_x^{-1}$. 
\end{theorem}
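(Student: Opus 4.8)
The plan is to pass to the interaction representation, carry out a single differentiation‑by‑parts (normal form) step on the Duhamel term, and then dispose of the two pieces that remain — a bilinear boundary contribution and a cubic spacetime integral — using the $C^{0}_{t}H^{s}_{x}$ a priori bound and Bourgain's restricted norm method respectively, taking the local solution of \cite{isa} together with its $X^{s,b}$ bound as input. Since $L\leftrightarrow-i\phi(\xi)$ with $\phi(\xi)=\xi^{3}-\xi^{-1}$, set $v(\xi,t)=e^{-it\phi(\xi)}\widehat u(\xi,t)$; as $e^{-tL}f$ corresponds to the time‑independent profile $\widehat f$ and multiplication by the unimodular symbol $e^{it\phi}$ preserves every $H^{\sigma}_{x}$ norm, it is equivalent to show $v(t)-\widehat f\in C^{0}_{t}H^{s+a}_{x}$. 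From the equation, $v(\xi,t)-\widehat f(\xi)=-\,i\xi\int_{0}^{t}\!\int_{\xi_{1}+\xi_{2}=\xi}e^{-is\Omega}\,v(\xi_{1},s)\,v(\xi_{2},s)\,d\xi_{1}\,ds$ with resonance $\Omega(\xi,\xi_{1},\xi_{2})=\phi(\xi)-\phi(\xi_{1})-\phi(\xi_{2})=3\xi\xi_{1}\xi_{2}+\dfrac{\xi_{1}^{2}+\xi_{1}\xi_{2}+\xi_{2}^{2}}{\xi\xi_{1}\xi_{2}}$ on $\xi=\xi_{1}+\xi_{2}$. The decisive structural fact — and the reason rotation reinstates smoothing on $\mathbb R$ — is that $\xi_{1}^{2}+\xi_{1}\xi_{2}+\xi_{2}^{2}>0$ away from the origin, so the two summands of $\Omega$ always share the sign of $\xi\xi_{1}\xi_{2}$; hence $|\Omega|\geq\max\!\bigl(3|\xi\xi_{1}\xi_{2}|,\,\tfrac{\xi_{1}^{2}+\xi_{1}\xi_{2}+\xi_{2}^{2}}{|\xi\xi_{1}\xi_{2}|}\bigr)\gtrsim\langle\xi\rangle+\langle\xi_{1}\rangle+\langle\xi_{2}\rangle$, and \emph{there is no resonant set at all} — sharply unlike KdV on $\mathbb R$, where $3\xi\xi_{1}\xi_{2}$ is small on a sizable set, and unlike the case $\beta\gamma<0$ of the Appendix, where the two summands cancel near zero frequency. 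Equivalently $\tfrac{\xi}{\Omega}=\tfrac{\xi^{2}\xi_{1}\xi_{2}}{3\xi^{2}\xi_{1}^{2}\xi_{2}^{2}+\xi_{1}^{2}+\xi_{1}\xi_{2}+\xi_{2}^{2}}$ is a bounded multiplier, vanishing at $\xi=0$ (so the mean‑zero constraint needed for $\partial_{x}^{-1}$ survives the transformations) and decaying like $(\xi_{1}\xi_{2})^{-1}$ in the high--high regime.

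Writing $e^{-is\Omega}=-\tfrac1{i\Omega}\partial_{s}e^{-is\Omega}$ and integrating by parts in $s$ splits $v(t)-\widehat f$ into: (a) a bilinear boundary term, whose Fourier transform is essentially $\tfrac{\xi}{\Omega}$ times the self‑convolution of $\widehat u(\cdot,t)$, minus the same at $t=0$; and (b) a cubic spacetime integral obtained by substituting the evolution equation for $\partial_{s}v(\xi_{1},s)$ and $\partial_{s}v(\xi_{2},s)$, which retains the gain $\tfrac1\Omega$, the oscillation $e^{-is\Omega}$, and an inner phase $\Omega_{1}$. For (a) it suffices, by Cauchy--Schwarz in $\xi_{1}$, that $\sup_{\xi}\langle\xi\rangle^{2(s+a)}\int\tfrac{\xi^{2}}{\Omega^{2}}\langle\xi_{1}\rangle^{-2s}\langle\xi_{2}\rangle^{-2s}\,d\xi_{1}<\infty$; using the lower bound on $|\Omega|$ this holds comfortably in the stated range (it in fact allows almost a full derivative of smoothing), and combined with $\sup_{[0,T_{LWP}]}\|u(t)\|_{H^{s}}\lesssim\|u\|_{X^{s,b}}\lesssim C(s,\|f\|_{H^{s}})$ it places (a) in $C^{0}_{t}H^{s+a}_{x}$. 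For (b) I would split according to whether the cubic resonance $\Omega+\Omega_{1}=\phi(\xi)-\phi(\xi_{2})-\phi(\xi_{1}')-\phi(\xi_{1}'')$ is small: on the non‑resonant part a second differentiation by parts produces a still smoother quartic remainder plus further boundary terms, while on the cubic‑resonant set one estimates directly, using the gain $\tfrac1\Omega$ together with bilinear $L^{2}$/Strichartz bounds for the Ostrovsky propagator (which at high frequency coincide with those of KdV, since there $\Omega\approx3\xi\xi_{1}\xi_{2}$) within the $X^{s,b}$ framework supplied by \cite{isa}. This last step is what pins down the admissible gain: the threshold $s>-\tfrac34$ reappears because the Ostrovsky and KdV resonances agree at high frequency, and the cap $a<\tfrac12$ (together with $a<s+\tfrac34$) reflects the most that one normal‑form step followed by the cubic estimate can extract. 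Assembling the two pieces gives $u(t)-e^{-tL}f=(\text{a})+(\text{b})\in C^{0}_{t}H^{s+a}_{x}$ with bound $C(s,\|f\|_{H^{s}})$ on $[0,T_{LWP}]$.

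The hard part I expect is the cubic term (b): unlike $\Omega$, the cubic resonance $\Omega+\Omega_{1}$ has no clean factorization and genuinely vanishes on nontrivial sets, so it must be handled by a careful trilinear estimate — covering interactions that produce a low output frequency, and those in which $\Omega_{1}$ nearly cancels $\Omega$ — while keeping exactly $s+a$ derivatives on the output uniformly as $s\downarrow-\tfrac34$; this balancing, rather than the comparatively benign bilinear boundary term, is what forces $a<\min(s+\tfrac34,\tfrac12)$. A secondary technical point is ensuring that the bilinear boundary estimate is genuinely an $L^{\infty}_{t}H^{s+a}_{x}$ bound, which uses the embedding $X^{s,b}\hookrightarrow C^{0}_{t}H^{s}_{x}$ for $b>\tfrac12$ rather than a bare spacetime bilinear estimate.
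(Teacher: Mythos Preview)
Your overall architecture---one differentiation-by-parts step followed by $X^{s,b}$ estimates---is the same as the paper's, and your identification of the nonresonance mechanism (both summands of $\Omega$ share the sign of $\xi\xi_1\xi_2$) is correct.  However, two points are off.

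\textbf{The boundary term is the bottleneck, not the cubic term.}  Your parenthetical that the bilinear boundary piece ``in fact allows almost a full derivative of smoothing'' is wrong.  Carry out your own Cauchy--Schwarz test in the low--high regime $|\xi_1|\ll|\xi|\approx|\xi_2|$: there $|\Omega|^{-1}\lesssim |\xi_1|/(\xi^2\xi_1^2+1)$, and the resulting $\xi_1$-integral scales like $|\xi|^{2a-1}$, which forces $a\le\tfrac12$.  The paper makes exactly this computation in Proposition~\ref{prop:smooth1} and notes in Remark~\ref{rmk:Bsharp} that it is sharp (take $\widehat f$ supported near $1/N$ and near $N$).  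So the cap $a<\tfrac12$ comes from the boundary term, not from the cubic remainder; your suggestion that a second differentiation by parts on (b) might help is therefore misguided---the limitation is already in (a) and cannot be removed by further normal forms.

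\textbf{Decomposition and handling of the cubic term.}  The paper does \emph{not} apply the normal form to the whole nonlinearity.  It first splits $N(u)=R(u)+\widetilde N(u)$, where $R(u)$ collects low output frequencies and high--high interactions; this piece is estimated directly in $X^{s+a,-\frac12+}$ (no normal form), and it is precisely here that the constraint $a<s+\tfrac34$ appears.  Only the low--high part $\widetilde N(u)$ is differentiated by parts, producing $B(u)$ and $NR(u)$.  The cubic term $NR(u)$ is then bounded in $X^{s+a,-\frac12+}$ by a direct case analysis on frequency sizes (Proposition~\ref{prop:smooth2}), with no second normal form and no splitting according to the cubic resonance $\Omega+\Omega_1$.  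Your proposed route through a second integration by parts and a cubic-resonant/nonresonant dichotomy is more elaborate than what the proof actually needs.
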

As in \cite{et1}, one can concatenate the smoothing statement and obtain a global smoothing estimate in the full range where global wellposedness is known, $s>-\frac3{10}$. 

To prove Theorem~\ref{theo:main1} we introduce a normal form transformation on \eqref{Ostr} which is similar to the one we applied to the periodic KdV problem in \cite{et1}. We should mention here that although Sobolev space smoothing fails for the KdV equation on $\R$, Colliander et al. in \cite{cst} proved that the nonlinear part of the KdV equation on the real line is smoother than the initial data for solutions that are restricted on high frequencies. Thus in general low--low and high--high interactions are not problematic in KdV type equations.  Therefore, we split the   nonlinearity into two parts 
and estimate them separately. The first part includes  low frequencies and  high-high interactions for which the smoothing can be established without a normal form transformation. These terms provide an upper bound on the smoothing estimate $a=s+\frac34$. As it is usually the case, the smoothing fails at the $-\frac34$ threshold. The harder to estimate terms are in the second part, which includes the low-high interactions (corresponding to the resonant terms for periodic KdV). We handle these terms with the help of a normal form tranformation. 

The main idea in our result rests on the fact that for the perturbation of KdV that we study the dispersive symbol can be lower bounded away from zero (see \eqref{phibound22} below), and thus the smoothing effect takes hold even for the low--high interactions on the real line. For problems with periodic boundary conditions, because of the conservation of the mean, one can remove the zero Fourier mode and hence the low--high interactions do not cause a problem. As such there are now many smoothing results for certain variants of the KdV equation on $\T$. We should also note that, in \cite{CS,CS1}, the authors obtained many smoothing estimates for dispersive PDEs on $\R^d$ with nonlinearities of order 3 and higher. However, their method fails for the quadratic nonlinearities as it is expected from the failure of smoothing in the KdV case. 

For our model the improvement in the estimates come from the phase function $\phi(\xi)=\xi^3-\frac{1}{\xi}$.  After the normal form transformation and because of the nonlinear interactions, the following function is introduced in the denominator of the  multilinear estimates:
\begin{multline}\label{phiid1}
\phi(\xi)-\phi(\xi_1)-\phi(\xi-\xi_1)  =\xi^3-\xi_1^3-(\xi-\xi_1)^3-\frac{1}{\xi}+\frac{1}{\xi_1}+\frac{1}{\xi-\xi_1}\\ =\frac{3\xi^2\xi_1^2(\xi-\xi_1)^2+\xi^2+\xi_1^2-\xi\xi_1}{\xi \xi_1 (\xi-\xi_1)},
\end{multline}
which implies 
\be\label{phibound21}  \frac{1}{|\phi(\xi)-\phi(\xi_1)-\phi(\xi-\xi_1)|}\les \frac{|\xi \xi_1 (\xi-\xi_1)|}{\xi^2\xi_1^2(\xi-\xi_1)^2+\xi^2+\xi_1^2}
\ee
Notice that for the low--high interactions ($|\xi_1|\ll |\xi|$), this term can be further bounded by
\be\label{phibound22} \frac{1}{|\phi(\xi)-\phi(\xi_1)-\phi(\xi-\xi_1)|}\les \frac{| \xi_1 |}{\xi^2\xi_1^2 +1}=K(\xi,\xi_1),
\ee
and that the maximum of the $K$ function is of the order $\frac{1}{\xi}$. This recovers the derivative of the nonlinearity. For the details, see Section 3.

 The paper is organized as follows.  In Section 2, we introduce our notation and define the spaces that we use. In addition, we state an elementary lemma that we use throughout the paper in order to prove the multilinear smoothing estimates. Section 3 introduces the normal form method for   equation \eqref{Ostr} and contains the heart of our arguments where different interactions are estimated. Section 4 contains an Appendix where we show that smoothing fails for equation \eqref{Ostr} for any $\gamma \leq 0$ when $\beta=1$. As we have already mentioned,  the case $\gamma=0$ corresponds to the KdV result of \cite{imt}.
 
\section{Notation} \label{sec:notation}

Recall that for $s\in \Bbb R$, $H^s(\Bbb R)$ is defined as a subspace of $L^2$ via the norm
$$
\|f\|_{H^s(\Bbb R)}:=\sqrt{\int_{\Bbb R} \la \xi \ra^{2s} |\widehat{f}(\xi)|^2\ d\xi},
$$
where $\la \xi \ra:=(1+{\xi}^2)^{1/2}$ and $\widehat{f}(\xi)=\int_{\Bbb R}f(x)e^{-2\pi i x \xi} dx$ are the Fourier coefficients of $f$. Plancherel's theorem takes the form
$$\sum_{\xi \in \R}  |\widehat{f}(\xi)|^2=\int_{\R}|f(x)|^2dx.$$
We denote the linear propagator of the equation as $e^{tL}$, where it is defined on the Fourier side as $\widehat{(e^{tL}f)}(\xi)=e^{ -it \phi(\xi)}\widehat{f}(\xi)$ and $\phi(\xi)=\xi^3-\frac{1}{\xi}$. 
We   also use $(\cdot)^+$ to denote $(\cdot)^\epsilon$ for all $\epsilon>0$ with implicit constants depending on $\epsilon$.

  The Bourgain spaces, $X^{s,b}$, will be defined as the closure of compactly supported smooth functions under the norm $$\|u\|_{X^{s,b}} :=\|e^{-tL}u\|_{H^b_t(\mathbb{R})H^s_x(\mathbb{R})}=\|\langle \tau-\phi(\xi) \rangle^{b} \langle \xi \rangle^s\widehat{u}(\xi,\tau)\|_{L_{\tau}^2 L^2_{\xi}}.$$

We close this section by presenting an elementary lemma that will be used repeatedly. For the proof see \cite{et3}.
\begin{lemma}\label{lem:sums} If $\beta \geq \gamma \geq 0$ and $\beta + \gamma > 1$, then
\[ \int \frac{1}{\langle x-a_1 \rangle ^\beta \langle x - a_2 \rangle^\gamma} dx \lesssim \langle a_1-a_2 \rangle ^{-\gamma} \phi_\beta(a_1-a_2),  \]
where
\[ \phi_\beta(a) \sim \begin{cases} 1 & \beta > 1 \\  \log(1 + \langle a \rangle ) & \beta = 1 \\ \langle a \rangle ^{1-\beta} &\beta < 1 .\end{cases} \]
\end{lemma}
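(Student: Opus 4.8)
The plan is to reduce to a single–variable integral, dispose of the short–range case $|a_1-a_2|\lesssim 1$ by hand, and then cut the line into the few regions where each of the two weights is large or small.

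First, translating $x\mapsto x+a_2$ and writing $a:=a_1-a_2$, the claim becomes
\[
I(a)\ :=\ \int_{\mathbb R}\frac{dy}{\langle y-a\rangle^{\beta}\,\langle y\rangle^{\gamma}}\ \lesssim\ \langle a\rangle^{-\gamma}\,\phi_\beta(a).
\]
If $|a|\le 1$ then $\langle y-a\rangle\sim\langle y\rangle$ uniformly in $y$, so $I(a)\lesssim\int_{\mathbb R}\langle y\rangle^{-\beta-\gamma}\,dy\lesssim 1$ — this is where the hypothesis $\beta+\gamma>1$ first enters — while the right–hand side is $\sim 1$; hence from now on we may assume $|a|\ge 1$, so that $\langle a\rangle\sim|a|$.

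For $|a|\ge 1$, I would decompose $\mathbb R$ into $R_1=\{|y|\le|a|/2\}$ (near the origin), $R_2=\{|y-a|\le|a|/2\}$ (near $a$), $R_3=\{|y|>|a|/2,\ |y-a|>|a|/2,\ |y|\le 2|a|\}$ (the bulk), and $R_4=\{|y|>2|a|\}$ (the tail). Every piece is then controlled by the single elementary one–variable estimate $\int_{|z|\le R}\langle z\rangle^{-\delta}\,dz\lesssim\phi_\delta(R)$, which is precisely the trichotomy $\delta>1$, $\delta=1$, $\delta<1$ recorded in the statement. On $R_2$ one has $\langle y\rangle\gtrsim\langle a\rangle$, so after $z=y-a$ the contribution is $\lesssim\langle a\rangle^{-\gamma}\int_{|z|\le|a|/2}\langle z\rangle^{-\beta}\,dz\lesssim\langle a\rangle^{-\gamma}\phi_\beta(a)$, i.e. already the claimed bound. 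On $R_1$ one has $\langle y-a\rangle\gtrsim\langle a\rangle$, giving a contribution $\lesssim\langle a\rangle^{-\beta}\int_{|y|\le|a|/2}\langle y\rangle^{-\gamma}\,dy\lesssim\langle a\rangle^{-\beta}\phi_\gamma(a)$; here one checks $\langle a\rangle^{-\beta}\phi_\gamma(a)\lesssim\langle a\rangle^{-\gamma}\phi_\beta(a)$, and this is the only place the hypothesis $\beta\ge\gamma$ is used. On $R_3$ one has $\langle y\rangle\sim\langle a\rangle$ while $z=y-a$ stays in $|z|\le 3|a|$, so the contribution is $\lesssim\langle a\rangle^{-\gamma}\int_{|z|\le 3|a|}\langle z\rangle^{-\beta}\,dz\lesssim\langle a\rangle^{-\gamma}\phi_\beta(a)$. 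Finally on $R_4$ one has $|y-a|\ge|y|/2$, so $\langle y-a\rangle\sim\langle y\rangle$ and the contribution is $\lesssim\int_{|y|>2|a|}\langle y\rangle^{-\beta-\gamma}\,dy\lesssim\langle a\rangle^{1-\beta-\gamma}\le\langle a\rangle^{-\gamma}\phi_\beta(a)$, using $\beta+\gamma>1$ for the tail integral and $\langle a\rangle^{1-\beta}\lesssim\phi_\beta(a)$ for the last inequality. Summing the four contributions proves the lemma.

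I expect the only real obstacle to be the bookkeeping: one must verify, in each of the regimes of $\beta$ and $\gamma$ relative to $1$, the elementary comparisons $\langle a\rangle^{1-\beta}\lesssim\phi_\beta(a)$ and $\langle a\rangle^{-\beta}\phi_\gamma(a)\lesssim\langle a\rangle^{-\gamma}\phi_\beta(a)$ for $|a|\ge 1$. None of these is deep; the delicate points are the logarithmic borderlines $\beta=1$ and $\gamma=1$, where $\phi_\beta$ is a logarithm rather than a power and one invokes $\log t\lesssim t^{\epsilon}$ for $t\ge 1$. This is also the structural reason the final bound carries $\phi_\beta$, governed by the larger of the two exponents, rather than $\phi_\gamma$: the factor $\langle a\rangle^{-\gamma}$ is always supplied by the weight that is \emph{not} localized near the relevant peak of the integrand, and, because $\beta\ge\gamma$, the extra $\langle a\rangle^{-\beta}$ picked up near the origin more than compensates for the larger $\phi_\gamma$ there.
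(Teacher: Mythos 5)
Your proof is correct. Note that the paper itself gives no argument for this lemma --- it simply cites \cite{et3} --- so there is nothing internal to compare against; your four-region decomposition (near $a_2$, near $a_1$, the intermediate band, and the tail), combined with the one-variable estimate $\int_{|z|\le R}\langle z\rangle^{-\delta}\,dz\lesssim\phi_\delta(R)$, is exactly the standard proof of this standard lemma. The two comparisons you flag as bookkeeping, namely $\langle a\rangle^{1-\beta}\lesssim\phi_\beta(a)$ and $\langle a\rangle^{-\beta}\phi_\gamma(a)\lesssim\langle a\rangle^{-\gamma}\phi_\beta(a)$ for $|a|\ge 1$, do check out in every regime (including the logarithmic borderlines $\beta=1$ and $\gamma=1$, and the endpoint $\gamma=0$ where $\phi_\gamma(a)\sim\langle a\rangle$), with $\beta\ge\gamma$ used precisely where you say it is.
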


\section{Normal form transform}\label{sec:NFT}
In this section we apply the normal form transformation  to the equation \eqref{Ostr} following the differentiation by parts method of \cite{bit}.  See Proposition~\ref{prop:normal} for the transformed equation. A similar method  was used in our work in \cite{et1} for the periodic KdV  (also see \cite{egt} for an application on $\R$, especially Proposition 6.1). Notice that for more regular data we gain half a derivative in our smoothing estimate, which is sharp (see Remark \eqref{rmk:Bsharp}), while in the case of periodic KdV we gained a full derivative. In both cases the formulas contain boundary terms, resonant terms and non-resonant terms. The non-resonant terms can be further transformed if needed. As noted above, the upper bound in the smoothing estimate for more regular solutions comes from the boundary terms, $B(u)$ in \eqref{Bhat},  which indicates that further applications of the normal form transformation that exist in the literature may not improve the result.  

To perform differentiation by parts we define $L=\partial_{xxx}-\partial_x^{-1}$ and $N(u)=\partial_x(u^2)$, and we write \eqref{Ostr} for $\beta=\gamma=1$ as 
\be\label{LNeq}
u_t+Lu+N(u)=0.
\ee
We observe that smoothing for the low frequency part and the high-high interactions can be obtained without normal-forms. Therefore,  
we further write
$$
N(u)=R(u)+   \widetilde N(u), 
$$
where 
$$
R(u):= P_{|\xi|\les 1} N(u) +P_{|\xi|\gg 1} N(P_{>|\xi|/100} u), 
$$
$$
\mF (\widetilde N(u)) (\xi)=i  \xi \int_\R m(\xi,\xi_1) u(\widehat \xi_1,t) u(\widehat{\xi-\xi_1},t)  d\xi_1 
$$
where 
$$m(\xi,\xi_1) = \chi_{|\xi|\gg 1} \big(\chi_{|\xi_1|\leq |\xi|/100} + \chi_{|\xi-\xi_1|\leq |\xi|/100}\big).$$

Letting $\phi(\xi)=\xi^3-\tfrac1\xi$, we have
$$
e^{Lt}u=\mF^{-1} \big[e^{-i\phi(\xi) t}u(\widehat \xi,t) \big].
$$
The following proposition is the main tool for proving Theorem~\ref{theo:main1}.
 \begin{proposition}\label{prop:normal} The solution $u$ of equation \eqref{Ostr} satisfies 
 \be\label{preduhamelR}
  \partial_t \left(e^{Lt } u - e^{Lt }  B (u)  \right)= - e^{Lt} \big(R(u) +   NR(u)   \big),
\ee
where
\be\label{Bhat}
\widehat{B (u)}(\xi)= \xi \int m(\xi,\xi_1)   \frac{  u_{\xi_1}  u_{\xi-\xi_1}}{\phi(\xi)-\phi(\xi_1)-\phi(\xi-\xi_1)} d\xi_1,  
\ee
\be\label{NRhat}
\widehat{NR (u)}(\xi)= \xi \int m(\xi,\xi_1) \frac{ u_{\xi_1}  w_{\xi-\xi_1}+w_{\xi_1} u_{\xi-\xi_1} }{\phi(\xi)-\phi(\xi_1)-\phi(\xi-\xi_1)} d\xi_1.
\ee
Here $u_\xi(t):=u(\widehat\xi,t)$ and  $w_\xi(t):=w(\widehat\xi,t)$ with 
$$
w=e^{-Lt}\partial_t(e^{Lt} u)= -N(u).
$$ 
\end{proposition}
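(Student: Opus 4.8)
\medskip
\noindent\emph{Proof sketch.} The plan is to derive \eqref{preduhamelR} by differentiation by parts in the time variable, following \cite{bit} and our treatment of periodic KdV in \cite{et1}. First I would conjugate \eqref{LNeq} by the linear propagator and use the splitting $N(u)=R(u)+\widetilde N(u)$ to write
$$\partial_t\big(e^{Lt}u\big)=e^{Lt}\big(u_t+Lu\big)=-e^{Lt}R(u)-e^{Lt}\widetilde N(u),$$
so that only the contribution of $\widetilde N(u)$ has to be transformed. Passing to Fourier variables, I would substitute $u_\eta=e^{i\phi(\eta)t}\,\mF(e^{Lt}u)(\eta)$ into the two bilinear factors of $\mF(\widetilde N(u))(\xi)=i\xi\int m(\xi,\xi_1)u_{\xi_1}u_{\xi-\xi_1}\,d\xi_1$; by the algebraic identity $\Phi(\xi,\xi_1)+\phi(\xi_1)+\phi(\xi-\xi_1)=\phi(\xi)$, where $\Phi(\xi,\xi_1):=\phi(\xi)-\phi(\xi_1)-\phi(\xi-\xi_1)$, all the exponentials collapse to a single oscillatory factor $e^{-it\Phi(\xi,\xi_1)}$ multiplying $\mF(e^{Lt}u)(\xi_1)\,\mF(e^{Lt}u)(\xi-\xi_1)$.

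The structural point that makes this work --- already visible in \eqref{phiid1}--\eqref{phibound22} --- is that the resonance function $\Phi$ does not vanish on $\supp m$; hence $e^{-it\Phi}=\partial_t\big(e^{-it\Phi}/(-i\Phi)\big)$ there, and inserting this and applying the product rule moves $\partial_t$ out of the $\xi_1$-integral. The term in which $\partial_t$ is pulled to the front, after the exponentials are folded back into $u_{\xi_1},u_{\xi-\xi_1}$ via the same identity, is exactly $-\partial_t\,\mF\big(e^{Lt}B(u)\big)(\xi)$ with $\widehat{B(u)}$ as in \eqref{Bhat}. In the remaining term $\partial_t$ falls on the product of the two transformed factors; there I would use $\partial_t\big(\mF(e^{Lt}u)(\eta)\big)=e^{-i\phi(\eta)t}w_\eta$ with $w=e^{-Lt}\partial_t(e^{Lt}u)=-N(u)$, together with the Leibniz rule, so that precisely one factor becomes a $w$-factor; recombining the exponentials identifies this term with $\mF\big(e^{Lt}NR(u)\big)(\xi)$, where $\widehat{NR(u)}$ is as in \eqref{NRhat}. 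Collecting the three pieces and absorbing the $B$-term into the left-hand side yields \eqref{preduhamelR} on the Fourier side, hence the proposition.

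The computation is entirely explicit, so I do not anticipate a genuine analytic obstacle; the two points that require attention are (i) the verification that $\Phi$ is bounded away from zero on $\supp m$ --- this is precisely what makes the normal form transformation nonsingular on $\R$, in contrast with periodic KdV, where the zero Fourier mode must first be removed --- and (ii) the bookkeeping of the oscillatory factors, i.e.\ the repeated use of $\Phi+\phi(\xi_1)+\phi(\xi-\xi_1)=\phi(\xi)$ to fold the time oscillations of the integrand back into the original unknowns $u$ and $w$. All the manipulations are a priori valid for smooth solutions; since by \cite{isa} the solution is an $X^{s,b}$ limit of smooth ones, and the multipliers in \eqref{Bhat}--\eqref{NRhat} are controlled by \eqref{phibound21}--\eqref{phibound22} (proved in the next section), the identity \eqref{preduhamelR} persists for the solutions considered in the statement.
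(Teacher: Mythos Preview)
Your proposal is correct and follows essentially the same approach as the paper: conjugate by the linear flow, express $\mF(e^{Lt}\widetilde N(u))$ with the single oscillatory factor $e^{-it\Phi(\xi,\xi_1)}$, then differentiate by parts in $t$ using $e^{-it\Phi}=\partial_t\big(e^{-it\Phi}/(-i\Phi)\big)$ on $\supp m$ to split off the boundary term $B$ and the remainder $NR$. The paper's proof is the same calculation, justified ``by smooth approximations,'' exactly as you indicate.
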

 
\begin{proof}
The following calculations can be justified  by smooth approximations. First observe that by \eqref{LNeq}
$$
\partial_t(e^{Lt}u)=-e^{Lt}N(u)= -e^{Lt} (R(u)+  \widetilde N(u)).
$$
On the Fourier side, we have
$$
  \mF(e^{Lt}\widetilde N(u))(\xi)=  i \xi \int e^{-i\phi(\xi) t} m(\xi,\xi_1)     u_{\xi_1} u_{\xi-\xi_1}  d\xi_1.
$$
We rewrite the   integral above as
\begin{multline*}
  i \xi \int e^{-i [\phi(\xi)-\phi(\xi_1)-\phi(\xi-\xi_1)] t} m(\xi,\xi_1)     [e^{Lt}u]_{\xi_1} [e^{Lt}u]_{\xi-\xi_1}  d\xi_1\\
= - \xi \partial_t  \int \frac{e^{-i [\phi(\xi)-\phi(\xi_1)-\phi(\xi-\xi_1)] t}}{ [\phi(\xi)-\phi(\xi_1)-\phi(\xi-\xi_1)]} m(\xi,\xi_1)     [e^{Lt}u]_{\xi_1} [e^{Lt}u]_{\xi-\xi_1}  d\xi_1\\
+\xi  \int \frac{e^{-i [\phi(\xi)-\phi(\xi_1)-\phi(\xi-\xi_1)] t}}{ [\phi(\xi)-\phi(\xi_1)-\phi(\xi-\xi_1)]} m(\xi,\xi_1)   \partial_t\big(  [e^{Lt}u]_{\xi_1} [e^{Lt}u]_{\xi-\xi_1}\big)  d\xi_1\\
=-\partial_t \mF (e^{Lt}B(u)) +\mF(e^{Lt} NR(u)).
\end{multline*}
\end{proof}
The following propositions estimate the terms that appear  in \eqref{preduhamelR}.  
\begin{proposition}\label{prop:smooth1} For fixed $s> -\frac34 $,   we have
\begin{align*}  
&\|B(u)\|_{H^{s+a}}\les \|u\|_{H^s}^2,  \,\,\,\,   a\leq \frac12 \\
&\|R(u)\|_{X^{s+a,-\frac12+}}\les \|u\|_{X^{s,\frac12+}}^2, \,\,\,\, a<  \frac34+s.  
\end{align*}  
\end{proposition}
\begin{proof} 
Recalling that $\phi(\xi)=\xi^3-\frac{1}{\xi}$,  we have 
\begin{multline}\label{phiid}
\phi(\xi)-\phi(\xi_1)-\phi(\xi-\xi_1)  =\xi^3-\xi_1^3-(\xi-\xi_1)^3-\frac{1}{\xi}+\frac{1}{\xi_1}+\frac{1}{\xi-\xi_1}\\ =\frac{3\xi^2\xi_1^2(\xi-\xi_1)^2+\xi^2+\xi_1^2-\xi\xi_1}{\xi \xi_1 (\xi-\xi_1)}.
\end{multline}
Therefore,
\be\label{phibound1}  \frac{1}{|\phi(\xi)-\phi(\xi_1)-\phi(\xi-\xi_1)|}\les \frac{|\xi \xi_1 (\xi-\xi_1)|}{\xi^2\xi_1^2(\xi-\xi_1)^2+\xi^2+\xi_1^2}
\ee
We start with the claim for $B$. By symmetry and noting  the support of $m(\xi,\xi_1)$,   we can assume that $|\xi_1|\leq |\xi|/100<|\xi-\xi_1|$. Therefore,
\be\label{phibound} \frac{1}{|\phi(\xi)-\phi(\xi_1)-\phi(\xi-\xi_1)|}\les \frac{| \xi_1 |}{\xi^2\xi_1^2 +1}.
\ee
Using this in \eqref{Bhat}, we have
$$\|B(u)\|_{H^{s+a}}\les \Big\|\xi^{1+s+a} \int m(\xi,\xi_1)   \frac{ |\xi_1| \la \xi_1\ra^{-s} \la \xi-\xi_1\ra^{-s}  f(\xi_1) f(\xi-\xi_1) }{\xi^2\xi_1^2+1 } \ d\xi_1 \Big\|_{L^2},
$$ where $f(\xi)=\la\xi\ra^s |u_\xi|$; $\|f\|_{L^2}=\|u\|_{H^s}$.   

By duality and Cauchy-Schwarz inequality
$$
\|B(u)\|_{H^{s+a}}^2\les \|u\|_{H^s}^4\sup_{|\xi|\gg 1}  \xi^{2+2s+2a} \int_{|\xi_1|\ll|\xi| }    \frac{ |\xi_1|^2 \la \xi_1\ra^{-2s} \la \xi-\xi_1\ra^{-2s}   }{(\xi^2\xi_1^2+1)^2 } \ d\xi_1.
$$
Note that the supremum is bounded by 
$$
\sup_{|\xi|\gg 1}  |\xi|^{2a-1} \int_{|\eta|\ll\xi^2}   \frac{ |\eta|^2 \la \eta/\xi\ra^{-2s}   }{(\eta^2+1)^2 }\ d\eta \les 1,
$$
provided that $a\leq \frac12$, $s>-\frac34$. 

We now consider $R(u)$: 
$$
R(u):= P_{|\xi|\les 1} N(u) +P_{|\xi|\gg 1} N(P_{>|\xi|/100} u).
$$
The first summand satisfies the claim by the local wellposedness theory, for any $a$. 
The contribution of the second summand to the $X^{s+a,-\frac12+}$ norm is bounded by 
$$
\Big\|\chi_{|\xi|\gg1} \int_{|\xi_1|,|\xi-\xi_1|\gtrsim |\xi|}     \frac{|\xi|^{1+s+a} |\xi_1|^{-s} |\xi-\xi_1|^{-s}  f(\xi_1,\tau_1)f(\xi-\xi_1,\tau-\tau_1)}{\la\sigma_1\ra^{\frac12+} \la \sigma_2\ra^{\frac12+} \la\sigma\ra^{\frac12-}} d \xi_1 d \tau_1    \Big\|_{L^2_{\xi,\tau}}, 
$$
where $\|f\|_{L^2_{\xi,\tau}}\les \|u\|_{X^{s,\frac12+}}$. Here $\sigma_1:=  \tau_1-\phi(\xi_1)  $, and $\sigma_2:= \tau-\tau_1-\phi(\xi-\xi_1)  $, and $\sigma:=   \tau -\phi(\xi ) $. We consider only the case when $\rho:=-s\in[0,\frac34)$; the case $s>0$ is easier. Also note that in the domain of the integral, we always have $|\xi-\xi_1|\approx|\xi_1|\gtrsim|\xi|$.

By symmetry, it suffices to consider the cases when $\la\sigma_1\ra$ is the largest and when  $\la\sigma\ra$ is the largest. When $\la\sigma_1\ra$ is the largest, using duality and Cauchy-Schwarz inequality, it suffices to bound  
 $$
\sup_{\xi_1,\tau_1} \int_{1\ll|\xi| \les |\xi_1|\approx|\xi-\xi_1|} \frac{|\xi|^{2+2a-2\rho} |\xi_1|^{2\rho} |\xi-\xi_1|^{2\rho}  d\xi d\tau}{\la\sigma_1\ra^{1+} \la \sigma_2\ra^{1+} \la\sigma\ra^{1-}}.
$$
Integrating in $ \tau$ and using the fact that 
$$\la\sigma_1\ra=\max(\la\sigma_1\ra,\la \sigma_2\ra,\la\sigma\ra)\gtrsim |\phi(\xi)-\phi(\xi_1)-\phi(\xi-\xi_1)|\gtrsim |\xi\xi_1(\xi-\xi_1)| \approx |\xi|\xi_1^2 $$ (the second inequality follows from \eqref{phiid}), we have the bound 
 $$
\sup_{\xi_1,\tau_1} \int_{1\ll|\xi| \les |\xi_1|\approx |\xi-\xi_1|} \frac{|\xi|^{2+2a-2\rho} |\xi_1|^{4\rho}   }{|\xi|^{1+} |\xi_1|^{2+}     \la\tau_1+\phi(\xi-\xi_1)-\phi(\xi)\ra^{1-}} \ d\xi
$$ 
 $$
\les \sup_{\xi_1,\tau_1} \int_{1\ll|\xi| \les |\xi_1|\approx|\xi-\xi_1|} \frac{|\xi|^{1+2a-2\rho-} |\xi_1|^{4\rho-2-}     }{  \la\tau_1+(\xi-\xi_1)^3-\xi^3 \ra^{1-}}\ d\xi.
$$
Letting $\eta= (\xi-\xi_1)^3-\xi^3$, we see that  
$$
d\eta \approx |\xi_1|^{1/2} |\eta+ \xi_1^3/4|^{1/2}  d\xi.
$$
Also note that when $|\xi_1|\gg|\xi|$
$$d\eta \approx |\xi_1|^{2-} |\eta+ \xi_1^3/4|^{0+} d\xi.
$$
In the case $|\xi |\approx |\xi_1|$, we bound the integral by
$$\int  \frac{|\xi_1|^{2a+2\rho-\frac32 }  d\eta }{ |\eta+ \xi_1^3/4|^{1/2} \la \tau_1+\eta\ra^{1-} } \les |\xi_1|^{2a+2\rho-\frac32 }\les 1
$$ 
provided that $a\leq \frac34-\rho$. In the  case $1\ll |\xi |\ll |\xi_1|$, we have the bound 
$$
  \int   \frac{|\xi(\eta)|^{1+2a-2\rho}|\xi_1|^{4\rho-4+} d\eta }{ |\eta+ \xi_1^3/4|^{0+} \la \tau_1+\eta\ra^{1-} } \les  \int \frac{|\xi(\eta)|^{-3+2a+2\rho+}  d\eta }{|\eta+ \xi_1^3/4|^{0+}  \la \tau_1+\eta\ra^{1+} } \les 1
$$
provided that $a< \frac32-\rho$. 

The case $\la\sigma\ra$ is the largest is analogous by integrating in $\xi_1,\tau_1$ instead of $\xi,\tau$. 
\end{proof}
\begin{rmk}\label{rmk:Bsharp}
 We note that the regularity of the $B$ term in Proposition~\ref{prop:smooth1} cannot be improved for any $s$ by considering the example $f(\xi_1)=N^\frac12\chi_{[\frac1N,\frac2N]}(\xi_1)+\chi_{[N,N+1]}(\xi_1)$, $N\gg 1$.
\end{rmk} 
\begin{proposition}\label{prop:smooth2} For fixed $s> -\frac34 $ and  $a<\min(\frac34+s,\frac12)$,  we have
$$
\big\| NR(u) \big\|_{X^{ s+ a,-\frac12+ }} \les \|u\|_{X^{s,\frac12+}}^3 
$$
\end{proposition}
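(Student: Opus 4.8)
The plan is to treat $NR(u)$ as a genuinely cubic expression. Since $w=-N(u)=-\partial_x(u^2)$, substituting into \eqref{NRhat} and using the symmetry of $m(\xi,\xi_1)$ and of the denominator under $\xi_1\leftrightarrow\xi-\xi_1$, it suffices to estimate a bilinear form in $(u,w)$ with symbol $M(\xi,\xi_1):=\xi\, m(\xi,\xi_1)\big/\big(\phi(\xi)-\phi(\xi_1)-\phi(\xi-\xi_1)\big)$ in the regime $|\xi_1|\le|\xi|/100<|\xi-\xi_1|$, so that $|\xi|\approx|\xi-\xi_1|\gg1$. By \eqref{phibound22} we have $|M(\xi,\xi_1)|\les|\xi|K(\xi,\xi_1)=|\xi||\xi_1|/(\xi^2\xi_1^2+1)\les1$, and, crucially, the denominator satisfies the lower bound $|\phi(\xi)-\phi(\xi_1)-\phi(\xi-\xi_1)|\gtrsim 1/K(\xi,\xi_1)=\max(\xi^2|\xi_1|,1/|\xi_1|)\gtrsim|\xi|$. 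As in the proof of Proposition~\ref{prop:smooth1} I treat only $\rho:=-s\in[0,\tfrac34)$, the case $s>0$ being easier, and I record that $\la\xi-\xi_1\ra^{-s}\la\xi\ra^{s+a}\approx\la\xi\ra^a$.

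I would put $w$ into the space $X^{s,-\frac12+}$ using the standard bilinear estimate $\|\partial_x(u^2)\|_{X^{s,-\frac12+}}\les\|u\|_{X^{s,\frac12+}}^2$ (valid for $s>-\tfrac34$, underlying the local theory of \cite{isa}), and prove a bilinear multiplier estimate from $X^{s,\frac12+}\times X^{s,-\frac12+}$ into $X^{s+a,-\frac12+}$. The region $|\xi_1|\gtrsim1$ is the easy one: there $|M|\les 1/(|\xi||\xi_1|)$, hence $|M|\la\xi_1\ra^\rho\la\xi\ra^a\les|\xi|^{a-1}|\xi_1|^{\rho-1}$, which is integrable in $\xi$ (since $a<1$) and in $\xi_1$ (since $\rho<1$), and a routine $X^{s,b}$ argument handles it. The substance is the low-frequency region $|\xi_1|\les1$, where $|M|\les1$ but the frequency weights only produce $\la\xi\ra^a$, so the gain of $a$ derivatives must come entirely from dispersion. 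There I would dualize against $v$ with $\|v\|_{X^{-s-a,\frac12-}}=1$ and use the resonance identity $\sigma-\sigma_1-\sigma_2=-\big(\phi(\xi)-\phi(\xi_1)-\phi(\xi-\xi_1)\big)$, where $\sigma=\tau-\phi(\xi)$ and $\sigma_1,\sigma_2$ are the modulations of the $u$- and $w$-inputs; with the lower bound above this gives $\max(\la\sigma\ra,\la\sigma_1\ra,\la\sigma_2\ra)\gtrsim|\xi|$. When $\la\sigma\ra$ or $\la\sigma_1\ra$ is the largest, the corresponding weight ($\la\sigma\ra^{-\frac12+}$ coming from $v$, respectively $\la\sigma_1\ra^{-\frac12-}$ coming from $u$) carries a factor $|\xi|^{-\frac12+}$; since $a<\tfrac12$ this absorbs $\la\xi\ra^a$ with a positive power of $|\xi|$ to spare, and the problem reduces — exactly as in Proposition~\ref{prop:smooth1} — to checking the finiteness of a supremum over $(\xi,\tau,\xi_1,\tau_1)$, using Lemma~\ref{lem:sums} for the modulation integrals and a change of variables $\eta\sim(\xi-\xi_1)^3-\xi^3$ for the frequency integral; the convergence of the latter forces the condition $a<\tfrac34+s$.

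The main obstacle, and the only point where placing $w$ in $X^{s,-\frac12+}$ is not by itself enough, is the branch in which $\la\sigma_2\ra$ — the modulation hidden inside $w$ — is the largest and of size only $\approx|\xi|$: writing $\widehat w=\la\cdot\ra^{-s}\la\sigma_2\ra^{\frac12-}F_2$ then generates a factor $\la\sigma_2\ra^{\frac12-}\approx|\xi|^{\frac12-}$ that is \emph{not} compensated by $\la\sigma\ra^{-\frac12+}$, since $\la\sigma\ra$ may be $O(1)$. To deal with this I would unfold $w=-\partial_x(u^2)$ and write $\xi-\xi_1=\xi'+\xi''$, so that $\sigma_2=\sigma'+\sigma''+\big(\phi(\xi')+\phi(\xi'')-\phi(\xi-\xi_1)\big)$ for the internal modulations $\sigma',\sigma''$; then $\la\sigma_2\ra\approx|\xi|$ forces either one of $\la\sigma'\ra,\la\sigma''\ra$ to be $\gtrsim|\xi|$ — in which case its $(-\frac12-)$-weight beats $|\xi|^{\frac12-}$ up to a harmless $|\xi|^{0-}$, leaving just enough room — or $|\phi(\xi')+\phi(\xi'')-\phi(\xi-\xi_1)|\gtrsim|\xi|$, i.e.\ $|\xi'\xi''|\gtrsim1$ (since $|\xi'+\xi''|\approx|\xi|$), which is precisely the amount of smoothing the $X^{s,\frac12+}$ bilinear estimate for $\partial_x(u^2)$ supplies. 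In every sub-branch one is again left with a supremum that converges exactly when $a<\min(\tfrac34+s,\tfrac12)$, and carrying out these (routine but delicate) case checks is the technical heart of the proposition.
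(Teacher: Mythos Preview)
Your plan to place $w=-\partial_x(u^2)$ in $X^{s,-\frac12+}$ and prove a bilinear multiplier bound
\[
X^{s,\frac12+}\times X^{s,-\frac12+}\longrightarrow X^{s+a,-\frac12+}
\]
does not close, and the failure is not confined to the branch $\la\sigma_2\ra$ largest that you single out. After writing $\widehat w=\la\cdot\ra^{-s}\la\sigma_2\ra^{\frac12-}F_2$, the effective kernel in the duality pairing is
\[
\frac{|M(\xi,\xi_1)|\,\la\xi\ra^{a}\,\la\sigma_2\ra^{\frac12-}}{\la\sigma_1\ra^{\frac12+}\la\sigma\ra^{\frac12-}}.
\]
In the case $\la\sigma\ra$ (or $\la\sigma_1\ra$) largest you spend that weight to gain $|\xi|^{-\frac12+}$; but the factor $\la\sigma_2\ra^{\frac12-}$ is still present in the \emph{numerator}, and there is now only one surviving negative modulation weight. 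The Cauchy--Schwarz reduction you invoke (``exactly as in Proposition~\ref{prop:smooth1}'') then requires integrating $\la\sigma_2\ra^{1-}\la\sigma_1\ra^{-1-}$ (or the analogous expression) in $\tau_1$, and this diverges: for large $|\tau_1|$ the integrand is only $|\tau_1|^{-0-}$. Equivalently, since $\la\sigma_2\ra$ can be as large as the maximal modulation, the $\frac12-$ loss from $w$ exactly cancels the $\frac12\mp$ gain you extract, leaving no decay for the time--frequency integrals. This is why the paper does \emph{not} treat $NR(u)$ bilinearly: it substitutes $w=-\partial_x(u^2)$ from the outset, obtaining a genuinely trilinear form with four modulations $\sigma,\sigma_1,\sigma_2,\sigma_3$, all three inputs carrying weights $\la\sigma_j\ra^{-\frac12-}$. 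The resonance identity then involves the \emph{cubic} phase $M=\phi(\xi_1)+\phi(\xi_2)+\phi(\xi-\xi_1-\xi_2)-\phi(\xi)$ (not the quadratic one you use), and the case analysis is organized by the sizes of $|\xi_1|,|\xi_2|,|\xi-\xi_1-\xi_2|$ rather than by which bilinear modulation dominates.

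Your unfolding in the $\la\sigma_2\ra$--largest branch is the right instinct and is essentially what the paper does globally; but you need to do it in every branch, not just that one. Once you expand fully, the ``easy'' region $|\xi_1|\gtrsim1$ and the ``hard'' region $|\xi_1|\les1$ each split into several subcases (the paper has six), and in the low--$\xi_1$ regime the relevant lower bound on $\la M\ra$ is typically of size $|\xi(\xi-\xi_2)\xi_2|$ or $|\xi|\xi_2^2$, not merely $|\xi|$; this is what ultimately produces the constraint $a<\min(\tfrac34+s,\tfrac12)$.
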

\begin{proof} 
Recall that 
$$m(\xi,\xi_1) = \chi_{|\xi|\gg 1} \big(\chi_{|\xi_1|\leq |\xi|/100} + \chi_{|\xi-\xi_1|\leq |\xi|/100}\big).$$
By $\xi_1\leftrightarrow\xi-\xi_1$ symmetry, it suffices to consider only the contribution of the first summand of $m$ to $NR(u) $:
\begin{multline*}
\xi \int m(\xi,\xi_1) \frac{ u_{ \xi_1}  w_{ \xi-\xi_1}  +w_{\xi_1}u_{\xi-\xi_1} }{\phi(\xi)-\phi(\xi_1)-\phi(\xi-\xi_1)} \ d\xi_1
=\\
\xi \chi_{|\xi|\gg 1}  \int _{|\xi_1|\leq |\xi|/100}   \frac{(\xi- \xi_1)\ u_{ \xi_1}  u_{\xi_2}  u_{\xi-\xi_1-\xi_2} + \xi_1 u_{\xi-\xi_1}u_{\xi_2} u_{\xi_1-\xi_2}}{\phi(\xi)-\phi(\xi_1)-\phi(\xi-\xi_1)} \ d\xi_1\ d\xi_2.
\end{multline*} 
Below, we provide details for the estimate of the first summand in the numerator. As $\xi_1$ is smaller compared to $\xi-\xi_1$, the analysis of the second term is similar and   easier. 

Letting $\rho=-s \in [0,3/4)$ and by duality, it suffices to bound the integral below by $\|f\|_{L^2_{\xi,\tau}}^4$ 
$$
 \int_{|\xi|\gg 1, |\xi_1|\leq |\xi|/100}  \frac{|\xi|^{2 -\rho+a}  \la  \xi_1\ra^\rho \la \xi_2\ra^\rho \la \xi-\xi_1-\xi_2\ra^\rho f_{ \xi_1}f_{\xi_2}  f_{\xi-\xi_1-\xi_2} f_\xi}{|\phi(\xi)-\phi(\xi_1)-\phi(\xi-\xi_1)| \la\sigma_1\ra^{\frac12+}\la \sigma_2\ra^{\frac12+}\la\sigma_3\ra^{\frac12+}\la\sigma\ra^{\frac12-}} d\xi_1d\xi_2 d\xi d\tau_1
 d\tau_2d\tau.
$$
Here $\sigma$'s  are defined as in the proof of Proposition~\ref{prop:smooth1}. Also using \eqref{phibound}, we have the bound
\be\label{eq:mainterm} \int_{|\xi|\gg 1, |\xi_1|\leq |\xi|/100}  \frac{ | \xi|^{2-\rho +a} |\xi_1| \la\xi_1\ra^\rho   \la \xi_2\ra^\rho \la \xi-\xi_1-\xi_2\ra^\rho |f_{ \xi_1}f_{\xi_2}  f_{\xi-\xi_1-\xi_2} f_\xi|}{[\xi^2\xi_1^2+1]\ \la\sigma_1\ra^{\frac12+}\la \sigma_2\ra^{\frac12+}\la\sigma_3\ra^{\frac12+}\la\sigma\ra^{\frac12-}} d\xi_1d\xi_2 d\xi d\tau_1
 d\tau_2d\tau.
\ee
Note that,
\begin{multline}\label{eq:M}
\la \sigma-\sigma_1-\sigma_2-\sigma_3\ra=\la  \phi( \xi_1)+\phi(\xi_2)+\phi(\xi-\xi_1-\xi_2)-\phi(\xi)\ra\\
\approx\big\la 3(\xi-\xi_1)(\xi-\xi_2)(\xi_1+\xi_2)+\frac1{\xi_1}+\frac1{\xi_2}+\frac1{\xi-\xi_1-\xi_2} \big \ra:=\la M\ra.
\end{multline}
We will consider the following cases: 
\begin{itemize}
\item Case 1) $|\xi_1|\les 1$, $|\xi_2|\les 1$.
\item Case 2) $1\ll |\xi_1|\ll |\xi|$ and  $|\xi_2|\les 1$.
\item Case 3) $1\ll |\xi_1|\ll |\xi|$,  $1\ll |\xi_2|\les |\xi| $ and $|\xi-\xi_1-\xi_2|\gg 1$.  
\item Case 4) $|\xi_1|\les \frac1{\la \xi (\xi-\xi_2)\xi_2\ra }$  and $|\xi_2|, |\xi-\xi_1-\xi_2|\gg 1$. 
\item Case 5) $1\gtrsim |\xi_1|\gg \frac1{\la \xi (\xi-\xi_2)\xi_2\ra }$  and $|\xi_2|, |\xi-\xi_1-\xi_2|\gg 1$. 
\item Case 6) $1\ll |\xi_1|\ll |\xi| \ll |\xi_2|  $.
\end{itemize}

Case 1) $|\xi_1|\les 1$, $|\xi_2|\les 1$.   By the change of variable $\xi_2\to \xi-\xi_1-\xi_2$, this also takes care of the case $|\xi_1|, |\xi-\xi_1-\xi_2| \les 1$.  By Cauchy-Schwarz inequality, it suffices to prove that
$$
\sup_{|\xi|\gg 1 } \ \int_{  |\xi_1|,|\xi_2|\les 1 }  \frac{ | \xi|^{4  +2a} |\xi_1|^2    }{[\xi^2\xi_1^2+1]^2 \  \la M\ra^{1-}} d\xi_1d\xi_2 < \infty.
$$  
Note that 
$$
\la M\ra\approx \la 3(\xi-\xi_1)(\xi-\xi_2)(\xi_1+\xi_2)+\frac1{\xi_1}+\frac1{\xi_2}  \big \ra =\big\la (\xi_1+\xi_2) [3(\xi-\xi_1)(\xi-\xi_2)+\frac1{\xi_1\xi_2}]\big\ra.
$$
Letting $\eta=(\xi_1+\xi_2) [3(\xi-\xi_1)(\xi-\xi_2)+\frac1{\xi_1\xi_2}]$ in the $\xi_2$ integral, we see that
$$d\eta=[3(\xi-\xi_1)(\xi-2\xi_2-\xi_1)-\frac1{\xi_2^2}] d\xi_2.$$
Note that the Jacobian is $\gtrsim \xi^2$ if $|\xi_2|\not\approx \frac1{|\xi|}$.  In this case we estimate the $\xi_2$ integral by
$$
\Big(\int_{  |\xi_2|\les 1}  \la M\ra^{-1-} d\xi_2\Big)^{1-} \les |\xi|^{-2+} \Big(\frac{1  }{  \la s\ra^{1+}}  ds\Big)^{1-}\les |\xi|^{-2+}.
$$
This leads to the bound
$$
\int_{  |\xi_1| \les 1 }  \frac{ | \xi|^{2  +2a+} |\xi_1|^2    }{[\xi^2\xi_1^2+1]^2 } d\xi_1 \les |\xi|^{-1+2a+},
$$
which is bounded on the set $|\xi|>1$ provided that $a<\frac12$. 

When $|\xi_2| \approx \frac1{|\xi|}$, consider the cases $|\xi_1| \not\approx \frac1{|\xi|}$ and $|\xi_1|  \approx \frac1{|\xi|}$ seperately. In the former case, $\la M\ra\gtrsim \la (\xi_1+\xi_2) \xi^2 \ra$, which leads to (for $a<\frac12$)
$$
\int_{ 1\gtrsim |\xi_1| \not\approx \frac1{|\xi|} ,|\xi_2| \approx \frac1{|\xi|} }  \frac{ | \xi|^{2  +2a+} |\xi_1|^2    }{[\xi^2\xi_1^2+1]^2 \   |\xi_1+\xi_2|^{1-} } d\xi_1d\xi_2\les |\xi|^{-1+2a+}\les 1.
$$
In the latter case, we have $\la M\ra\gtrsim \la (\xi_1+\xi_2)(3 \xi^2+\frac1{\xi_1\xi_2}) \ra\approx  \la (\xi_1+\xi_2) \xi^3 (\xi_2 +\frac1{3\xi_1\xi^2 }) \ra$, which leads to
\begin{multline*}
\int_{  |\xi_1|,|\xi_2| \approx \frac1{|\xi|} }  \frac{ | \xi|^{-1 +2a+ }     }{    |\xi_1+\xi_2|^{1-} | \xi_2+\frac1{3\xi_1\xi^2}|^{1-}} d\xi_1d\xi_2\les
\int_{  |\xi_1|  \approx \frac1{|\xi|} }  \frac{ | \xi|^{-1 +2a+ }     }{    |\xi_1-\frac1{3\xi_1\xi^2}|^{1-}} d\xi_1\\ \les 
\int_{  |\xi_1|  \approx \frac1{|\xi|} }  \frac{ | \xi|^{-2 +2a+ }     }{    |\xi_1^2-\frac1{3 \xi^2}|^{1-}} d\xi_1
\les  |\xi|^{-1+2a+}\les 1 
\end{multline*}
provided that $a<\frac12$.

Case 2) $1\ll |\xi_1|\ll |\xi|$ and  $|\xi_2|\les 1$.   By the change of variable $\xi_2\to \xi-\xi_1-\xi_2$, this also takes care of the case $1\ll |\xi_1|\ll |\xi|$, $|\xi-\xi_1-\xi_2| \les 1$. By Cauchy-Schwarz inequality
$$
\sup_{|\xi|\gg 1 } \ \int_{  1\ll |\xi_1|\ll |\xi|,\ |\xi_2|\les 1 }  \frac{ | \xi|^{4  +2a} |\xi_1|^{2+2\rho} }{[\xi^2\xi_1^2+1]^2 \  \la M\ra^{1-}} d\xi_1d\xi_2 \les \sup_{|\xi|\gg 1 } \ \int_{  1\ll |\xi_1|\ll |\xi|,\ |\xi_2|\les 1 }  \frac{ | \xi|^{ 2a} |\xi_1|^{ 2\rho-2} }{   \la M\ra^{1-}} d\xi_1d\xi_2.
$$  
Note that 
$$
\la M\ra\approx \la 3(\xi-\xi_1)(\xi-\xi_2)(\xi_1+\xi_2)+ \frac1{\xi_2}  \big \ra  
$$
Note that the contribution of the set  $|\xi_2|\les \frac1{ \xi^2 |\xi_1| }$ is $\les 1$ provided that $a<1$.  
  If $|\xi_2|\gg\frac1{ \xi^2 |\xi_1| } $, then $\la M\ra\approx \xi^2 |\xi_1|$, which leads to  the bound
$$\sup_{|\xi|\gg 1 } \ \int_{  1\ll |\xi_1|\ll |\xi|,\ |\xi_2|\les 1 }   | \xi|^{ 2a-2+} |\xi_1|^{ 2\rho-3+}   d\xi_1d\xi_2 \les 1,
$$ 
provided that $a<1$.

Case 3) $1\ll |\xi_1|\ll |\xi|$,  $1\ll |\xi_2|\les |\xi| $ and $|\xi-\xi_1-\xi_2|\gg 1$. Once again we have
$$
\sup_{|\xi|\gg 1 } \ \int_{  1\ll |\xi_1|\ll |\xi|,\ 1\ll |\xi_2|\les |\xi|,\  |\xi-\xi_1-\xi_2|\gg 1 }  \frac{ | \xi|^{-2\rho  +2a} |\xi_1|^{ 2\rho-2} |\xi_2|^{2\rho}  |\xi-\xi_1-\xi_2|^{2\rho} }{   \la M\ra^{1-}} d\xi_1d\xi_2$$
$$
\les \sup_{|\xi|\gg 1 } \ \int_{  1\ll |\xi_1|\ll |\xi|  }  \frac{ | \xi|^{ 2\rho  +2a} |\xi_1|^{ 2\rho-2}   }{  |\xi|^{1-} |\xi-\xi_2|^{1-}  |\xi_1+\xi_2|^{1-}} d\xi_1d\xi_2$$
$$
\les \sup_{|\xi|\gg 1 } \ \int_{  1\ll |\xi_1|\ll |\xi|  }  \frac{ | \xi|^{ 2\rho  +2a-1+} |\xi_1|^{ 2\rho-2}   }{      |\xi_1+\xi |^{1-}} d\xi_1 \les 1 $$
provided that $a<1-\rho$.

Case 4) $|\xi_1|\les \frac1{\la \xi (\xi-\xi_2)\xi_2\ra }$  and $|\xi_2|, |\xi-\xi_1-\xi_2|\gg 1$.

By Cauchy-Schwarz inequality as above 
\begin{multline*}
\sup_{|\xi|\gg 1 } \ \int_{  |\xi_1|\les \frac1{|\xi (\xi-\xi_2) \xi_2| }  }   | \xi|^{4 -2\rho +2a} |\xi_1|^2  \la \xi_2\ra^{2\rho} \la\xi-\xi_2\ra^{2\rho}   d\xi_1d\xi_2 \\ \les  \sup_{|\xi|\gg 1 } \int   \frac{ | \xi|^{4 -2\rho +2a}   \la \xi_2\ra^{2\rho} \la\xi-\xi_2\ra^{2\rho}  }{|\xi (\xi-\xi_2) \xi_2|^3  }  \ d\xi_2 \les \sup_{|\xi|\gg 1 } |\xi|^{2a-2} < \infty,
\end{multline*}
provided that $a\leq 1$.

Case 5) $1\gtrsim |\xi_1|\gg \frac1{\la \xi (\xi-\xi_2)\xi_2\ra }$  and $|\xi_2|, |\xi-\xi_1-\xi_2|\gg 1$.\\ 
Note that in this case, we also have $  |\xi-\xi_2|\gg 1$ and
$$
\la M\ra \approx \la 3(\xi-\xi_1)(\xi-\xi_2)(\xi_1+\xi_2)+\frac1{\xi_1}  \big \ra \approx |\xi(\xi-\xi_2)\xi_2|. 
$$
We bound \eqref{eq:mainterm}  on this set by
$$ \int_{|\xi|\gg 1 \gtrsim |\xi_1|\gg \frac1{\la \xi (\xi-\xi_2)\xi_2\ra }}  \frac{ | \xi|^{2-\rho +a} |\xi_1|    | \xi_2|^\rho | \xi- \xi_2|^\rho |f_{ \xi_1}f_{\xi_2}  f_{\xi-\xi_1-\xi_2} f_\xi|}{[\xi^2\xi_1^2+1]\ \la\sigma_1\ra^{\frac12+}\la \sigma_2\ra^{\frac12+}\la\sigma_3\ra^{\frac12+}\la\sigma\ra^{\frac12-}} d\xi_1d\xi_2 d\xi d\tau_1
 d\tau_2d\tau.
$$
By symmetry, and since $ \max( \la\sigma_1\ra,\la \sigma_2\ra,\la\sigma_3\ra,\la\sigma\ra)\gtrsim \la M\ra$, it suffices to consider the subcases  $\la\sigma_1\ra\gtrsim \la M\ra$, $\la \sigma_2\ra\gtrsim \la M\ra$, and $\la\sigma\ra\gtrsim \la M\ra$. 
 
Subcase 1: $\la\sigma_1\ra\gtrsim \la M\ra\approx |\xi(\xi-\xi_2)\xi_2|$.
By Cauchy-Schwarz inequality and then integrating in $\tau, \tau_2$, and using $\frac{|\xi_1|}{\xi^2\xi_1^2+1}\les \frac1{|\xi|}$, it suffices to consider  
\be\label{xixieta}
\sup_{|\xi_1|\les 1, \tau_1 } \int_{|\xi|\gg 1 \gtrsim |\xi_1|\gg \frac1{\la \xi (\xi-\xi_2)\xi_2\ra }}  \frac{ | \xi|^{1-2\rho +2a}     | \xi_2|^{2\rho-1} | \xi- \xi_2|^{2\rho-1} }{   \la \tau_1+\phi(\xi_2)+\phi(\xi-\xi_1-\xi_2)-\phi(\xi)\ra^{1+}}  d\xi_2 d\xi.
\ee
Noting that the factor in the denominator is $\approx   \la \tau_1+ \xi_2^3+(\xi-\xi_1-\xi_2)^3-\xi^3\ra$, we let $\eta =\xi_2^3+(\xi-\xi_1-\xi_2)^3-\xi^3$. We have 
$$
\Big|\frac{\partial\eta}{\partial\xi_2}\Big|\approx |\xi||\xi-\xi_1-2\xi_2|,\,\,\,\,\Big|\frac{\partial\eta}{\partial\xi }\Big|\approx |\xi_2||2\xi-\xi_1-\xi_2|.
$$
When $|\xi_2|\geq \frac32 |\xi|$, we use the change of variable $\eta$ in the $\xi_2$ integral in \eqref{xixieta}, and obtain 
$$
\les \sup_{|\xi_1|\les 1, \tau_1 } \int_{|\xi|\gg 1 }  \frac{ | \xi|^{ -2\rho +2a}     | \xi_2(\eta,\xi) |^{4\rho-3}   }{   \la \tau_1+\eta\ra^{1+}}  d\eta d\xi
\les   \int_{|\xi|\gg 1 }   | \xi|^{  2\rho +2a-3} d\xi \les 1,
$$
provided that $\rho\leq \frac34$ and $a\leq 1-\rho$. 
 
When $|\xi |\geq \frac32 |\xi_2|$,  we use the change of variable $\eta$ in the  $\xi $ integral: 
$$
\les \sup_{|\xi_1|\les 1, \tau_1 } \int_{|\xi_2|\gg 1}   \frac{ | \xi(\eta,\xi_2)|^{  2a-1}     | \xi_2 |^{2\rho-2}   }{   \la \tau_1+\eta\ra^{1+}}  d\eta d\xi_2
\les   \int_{|\xi_2|\gg 1}   | \xi_2|^{  2\rho +2a-3} d\xi_2 \les 1,
$$
provided that $a\leq \min(\frac12,1-\rho)$.

When $\frac23|\xi|\leq |\xi_2|\leq \frac32|\xi|$, we use the change of variable $\eta$ in the  $\xi $ integral: 
 $$
\les \sup_{|\xi_1|\les 1, \tau_1 } \int_{|\xi_2|\gg 1}   \frac{     | \xi_2 |^{2a-2}   | \xi(\eta,\xi_2) -\xi_2|^{  2\rho-1} }{   \la \tau_1+\eta\ra^{1+}}  d\eta d\xi_2 
\les   \int_{|\xi_2|\gg 1}   | \xi_2|^{  2a -2 +\max(2\rho-1,0)} d\xi_2 \les 1,
$$
provided that $a\leq \frac12-\max(\rho-\frac12,0)$. In the second inequality we used the bound $|\xi-\xi_2|\gg 1$, which is valid throughout the Case 5.  

Subcase 2:  $\la\sigma_2\ra\gtrsim \la M\ra\approx |\xi(\xi-\xi_2)\xi_2|$.
By Cauchy-Schwarz inequality and then integrating in $\tau, \tau_1$, and using $\frac{|\xi_1|^2}{[\xi^2\xi_1^2+1]^2}\les \frac1{|\xi|^{3-}|\xi_1|^{1-}}$  it suffices to consider  
$$
\sup_{|\xi_2|\gg 1, \tau_2 } \int_{|\xi|\gg 1 \gtrsim |\xi_1|\gg \frac1{\la \xi (\xi-\xi_2)\xi_2\ra }}  \frac{ | \xi|^{ -2\rho +2a+}     | \xi_2|^{2\rho-1} | \xi- \xi_2|^{2\rho-1} }{ |\xi_1|^{1-} \la \tau_2+\phi(\xi_1)+\phi(\xi-\xi_1-\xi_2)-\phi(\xi)\ra^{1+}}  d\xi_1 d\xi.
$$
Noting that the factor in the denominator is $\approx   \la \tau_2+ \phi(\xi_1) +(\xi-\xi_1-\xi_2)^3-\xi^3\ra$, we let $\eta =\tau_2+ \phi(\xi_1) +(\xi-\xi_1-\xi_2)^3-\xi^3$. We have 
$$
 \Big|\frac{\partial\eta}{\partial\xi }\Big|\approx |\xi_2||2\xi-\xi_1-\xi_2|.
$$
We use this change of variable when $|2\xi-\xi_2|\gg 1$ to obtain
$$
\sup_{|\xi_2|\gg 1, \tau_2 } \int_{|\xi|\gg 1 \gtrsim |\xi_1| }  \frac{ | \xi|^{ -2\rho +2a+}      | \xi_2|^{2\rho-2} | \xi- \xi_2|^{2\rho-1} }{  |2\xi-\xi_2| |\xi_1|^{1-}  \la \eta \ra^{1+}}  d\eta d\xi_1 \les 1
$$
provided that
$$
\sup_{|\xi|, |\xi_2|,|\xi-\xi_2|,|2\xi-\xi_2| \gg 1} \frac{ | \xi|^{ -2\rho +2a+}      | \xi_2|^{2\rho-2} | \xi- \xi_2|^{2\rho-1} }{  |2\xi-\xi_2| } \les 1.
$$
This inequality holds for $a<1-\rho$ by considering the cases $|\xi|\approx|\xi_2|$ and $|\xi|\not\approx |\xi_2|$ seperately. 

When $|2\xi-\xi_2|\les 1$, we have $|\xi_2|, |\xi-\xi_2|\approx |\xi|$. Therefore, we have the bound
$$
\sup_{|\xi_2|\gg 1, \tau_2 } \int_{|2\xi-\xi_2|, |\xi_1| \les 1 }  \frac{ | \xi|^{  2\rho +2a-2+}     }{ |\xi_1|^{1-}  }  d\xi_1 d\xi\les 1
$$
provided that $a<1-\rho$ (since the $\xi$ integral is on an interval of length $\les 1$). 

Subcase 3:  $\la\sigma \ra\gtrsim \la M\ra\approx |\xi(\xi-\xi_2)\xi_2|$.
By Cauchy-Schwarz inequality and then integrating in $\tau_1, \tau_2$, and using $\frac{|\xi_1|^2}{[\xi^2\xi_1^2+1]^2}\les \frac1{|\xi|^{3-}|\xi_1|^{1-}}$  it suffices to consider  
$$
\sup_{|\xi |\gg 1, \tau  } \int_{ 1 \gtrsim |\xi_1| }  \frac{ | \xi|^{ -2\rho +2a+}     | \xi_2|^{2\rho-1+} | \xi- \xi_2|^{2\rho-1+} }{ |\xi_1|^{1-} \la \tau -\phi(\xi_1)-\phi(\xi-\xi_1-\xi_2)-\phi(\xi_2)\ra^{1+}}  d\xi_1 d\xi_2.
$$
This can be handled as in Subcase 2. 

Case 6) $1\ll |\xi_1|\ll |\xi| \ll |\xi_2|  $.

Note that in this case, we   have  
$$
\la M\ra \approx \la 3(\xi-\xi_1)(\xi-\xi_2)(\xi_1+\xi_2)  \big \ra \approx |\xi \xi_2^2|. 
$$
We bound \eqref{eq:mainterm}  on this set by
$$\int_{  1\ll |\xi_1|\ll |\xi| \ll |\xi_2| }    \frac{ | \xi|^{a-\rho } |\xi_1|^{\rho-1}    | \xi_2|^{2\rho}  |f_{ \xi_1}f_{\xi_2}  f_{\xi-\xi_1-\xi_2} f_\xi|}{  \la\sigma_1\ra^{\frac12+}\la \sigma_2\ra^{\frac12+}\la\sigma_3\ra^{\frac12+}\la\sigma\ra^{\frac12-}} d\xi_1d\xi_2 d\xi d\tau_1
 d\tau_2d\tau.
$$
We consider the  subcases as in Case 5. Since they can be handled similarly, we give details only for subcase 1:

Subcase 1: $\la\sigma_1\ra\gtrsim \la M\ra\approx |\xi \xi_2^2|$.

As above, it suffices to bound
$$
\sup_{|\xi_1|\gg 1, \tau_1} \int_{  |\xi_1|\ll |\xi| \ll |\xi_2| }    \frac{ | \xi|^{2a-2\rho-1 } |\xi_1|^{2\rho-2}    | \xi_2|^{4\rho-2}   }{   \la \tau_1+\phi(\xi_2)+\phi(\xi-\xi_1-\xi_2)-\phi(\xi)\ra^{1+}}  d\xi_2 d\xi.  
$$
Letting $\eta=\tau_1+\xi_2^3+(\xi-\xi_1-\xi_2)^3-\xi^3$ we see that 
$$
 \Big|\frac{\partial\eta}{\partial\xi }\Big|\approx |\xi_2|^2.
$$
This leads to the bound 
 $$
\int_{   |\xi| \ll |\xi_2| }    \frac{ | \xi|^{2a-2\rho-1 }   |\xi_2|^{ 4\rho-4}   }{   \la \eta\ra^{1+}}  d\xi_2 d\eta \les 
\sup_{|\xi|\gg 1}  | \xi|^{2a+2\rho-4+ }   \int_{  |\xi_2| \gg 1  }    \frac{    |\xi_2|^{ -1-}   }{   \la \eta\ra^{1+}}  d\xi_2 d\eta \les 1
$$
provided that $\rho<\frac34$ and $a<2-\rho$. 
\end{proof}

We now outline the proof of Theorem~\ref{theo:main1}. For more details and the global version of the theorem,  see \cite{et1}.
\begin{proof}[Proof of Theorem~\ref{theo:main1}]
 Integrating  \eqref{preduhamelR} on $[0,t]$ we obtain
$$
 u(t)-e^{-Lt}f =B(u(t)) -e^{Lt}B(f) - \int_0^te^{-L(t-s) } \big(R(u) +  NR (u)  \big)\ ds.
$$
The claim follows from this identity using  the bounds in Proposition~\ref{prop:smooth1}, Proposition~\ref{prop:smooth2}, standard $X^{s,b} $ inequalities, and the embedding $X^{s,b}\subset C^0_t H^{s}_x$ for $b>\frac{1}{2}$.
 \end{proof}

\section{Appendix}
Consider the equation 
\be \label{OstrGamma}\left\{\begin{array}{l}
u_t+u_{xxx} -\gamma \partial_x^{-1} u + \partial_x(u^2)  =0,\,\,\,\,x\in\R , t\in \R,\\
u(x,0)=f(x).  
\end{array}\right. 
\ee
The Duhamel formulation of the equation is 
$$
u=e^{-Lt}f(x)-  e^{-Lt} \int_0^t e^{Ls}N(u) ds,
$$
where $Lu=u_{xxx}-\gamma \partial_x^{-1} u$ and $N(u)=\partial_x(u^2) $. Now we consider the nonlinear part of the first Picard iterate, which is obtained by replacing $u$ with $e^{-Lt}f(x)$. On the Fourier side, it is given by
\begin{multline}\label{ptg}
\widehat{P_t(f)}:=-e^{i\phi(\xi)t} \xi  \int_{\R} \int_0^te^{-i [\phi(\xi)-\phi(\eta)-\phi(\xi-\eta)]s}  \widehat{f}(\eta)\widehat{f}(\xi-\eta) ds d\eta
\\
=-ie^{i\phi(\xi)t} \xi \int_{\R}  \frac{e^{-i[\phi(\xi)-\phi(\eta)-\phi(\xi-\eta)]t}-1}{\phi(\xi)-\phi(\eta)-\phi(\xi-\eta)}  \widehat{f}(\eta)\widehat{f}(\xi-\eta)  d\eta, 
\end{multline}
where $\phi(\xi)=\xi^3-\frac{\gamma}{\xi}$.

We claim that there is no smoothing in the first Picard iterate when $\gamma \leq 0$, namely $P_t(f)$ is not smoother than the initial data. This is already known for $\gamma=0$, see \cite{imt}.
We provide the proof for the cases $\gamma=0$ and $\gamma=-1$ for completeness.   
 
For $\gamma=0$, we rewrite \eqref{ptg} as 
$$
\widehat{P_t(f)}= -ie^{i \xi^3 t} \int \frac{\widehat f(\eta) \widehat f(\xi-\eta)}{\eta(\xi-\eta)} (e^{-it3\xi\eta(\xi-\eta)}-1) d\eta.  
$$
Let $\widehat f_N(\eta)=i\ \sign(\eta) \big[N \chi_{[\frac{\alpha}{N^2},\frac{\beta}{N^2}]}(|\eta|)+ N^{-s} \chi_{[N,N+1]}(|\eta|)\big]$. Note that $f_N$ is real-valued. Here $\alpha<\beta$ are chosen depending on fixed $t$ so that
$$
[3t\alpha,3t\beta]\subset [\pi-\frac1{10},\pi+\frac1{10}].
$$ 
This guarantees that $\Re (1-e^{-it3\xi\eta(\xi-\eta)} )\gtrsim 1$ in the support of $\widehat f_N$.  We also have $\|f_N\|_{H^s}\approx 1$.

Note that there are four hi-low contributions and   they have the same sign because of  symmetry and because $\frac{\widehat{f}(\eta)}{\eta}$ is even.  Therefore, for large $N$ and $\xi\in[N,N+1]$, we have 
$$
|\widehat{P_t(f_N)}(\xi)|\approx \int_{[\frac{\alpha}{N^2},\frac{\beta}{N^2}]} N^{1-s} Nd\eta \approx N^{-s}.
$$
We conclude that, for $a>0$,  $\sup_{N } \|P_t(f_N)\|_{H^{s+a}} =\infty$. 

When $\gamma=-1$, we instead have
\be\label{ptggamma}
\big|\widehat{P_t(f)}\big|=\Big| \int \frac{\xi \widehat f(\eta) \widehat f(\xi-\eta)}{\Phi(\xi,\eta)} (e^{-it \Phi(\xi,\eta) }-1) d\eta\Big|,  
\ee
where 
\be\label{bigphi}
\Phi(\xi,\eta) =\phi(\xi)-\phi(\eta)-\phi(\xi-\eta)= 3\xi\eta(\xi-\eta)+\frac1\xi-\frac1\eta-\frac1{\xi-\eta}.
\ee
Taking $N\gg1$ and  
$$\widehat f_N(\eta)=i\sign(\eta) \big[N \chi_{[\frac{1}{\sqrt 3 N -\beta},\frac{1}{\sqrt 3 N -\alpha}]}(|\eta|)+ N^{-s} \chi_{[N,N+1]} (|\eta|)\big],$$
we see that $f_N$ is real-valued and  $\|f_N\|_{H^s}\approx 1$. This is because
$$
\frac{1}{\sqrt 3 N -\alpha}-\frac{1}{\sqrt 3 N -\beta}\les \frac1{N^2}.
$$

Fixing  $\xi\in[N-c,N+c]$, by $\eta\leftrightarrow \xi-\eta$ symmetry in \eqref{ptggamma} and \eqref{bigphi}, we can assume $| \eta| \les \frac1N $.  Therefore, we see that
$$
\Phi(\xi,\eta) =3\xi^2\eta -\frac1\eta+O(1/N).  
$$
We now put additional restrictions on  $c$ and on    $0< \alpha<\beta$ depending only on $t>0$ 
so that for $\xi\in [N-c,N+c]$ and $|\eta|\in [\frac{1}{\sqrt 3 N -\beta},\frac{1}{\sqrt 3 N -\alpha}]$, we have  $$\Phi(\xi,\eta)\ \sign(\eta)\approx 1$$ and $$|t \Phi(\xi,\eta)| \in  [k\pi-\frac1{10},k\pi+\frac1{10}],$$
where $k=k(t)>0$ is odd. 

A calculation as above yields the claim:
$$
|\widehat{P_t(f_N)}(\xi)|\approx \int_{[\frac{1}{\sqrt 3 N -\beta},\frac{1}{\sqrt 3 N -\alpha}]} N^{1-s} Nd\eta \approx N^{-s}.
$$
Therefore, we again conclude that, for $a>0$,  $\sup_{N } \|P_t(f_N)\|_{H^{s+a}} =\infty$.

\end{document}